\newcommand{\PSH}{\mbox{$\mathcal{PSH}$}}
\newcommand{\ddc}{\operatorname{dd^{c}}}
\newcommand{\C}{\mathbb{C}}
\newcommand{\R}{\mathbb{R}}
\newcommand{\abs}[1]{\left|#1\right|}
\newcommand{\Cn}{\mathbb{C}^n}
\newcommand{\diam}{\operatorname{diam}}
\renewcommand{\bar}[1]{\overline{#1}}
\renewcommand{\epsilon}{\varepsilon}
\renewcommand{\tilde}[1]{\widetilde{#1}}
\renewcommand{\hat}[1]{\widehat{#1}}
\newtheorem{theorem}{Theorem}
\newtheorem{corollary}[theorem]{Corollary}
\newtheorem{lemma}[theorem]{Lemma}
\newtheoremstyle{note}
  {3pt}
  {3pt}
  {}
  {}
  {\itshape}
  {:}
  {.5em}
  {}
\newtheorem{note}{Remark}
\theoremstyle{definition}
\newtheorem{definition}{Definition}
\newtheorem{example}{Example}
\begin{document}

\date{\today}

\author[B. Avelin]{Benny Avelin} \address{Department of Mathematics \\ Uppsala University \\ SE-751 06 Uppsala \\ Sweden}\email{benny.avelin@math.uu.se}
\author[L. Hed]{Lisa Hed}\address{Department of Mathematics and Mathematical Statistics\\ Ume\aa \ University\\SE-901 87 Ume\aa \\ Sweden}\email{lisa.hed@math.umu.se}
\author[H. Persson]{H\aa{}kan Persson}\address{Department of Mathematics \\ Uppsala University \\ SE-751 06 Uppsala \\ Sweden}\email{hakan.persson@math.uu.se}

\title[Plurisubharmonic Functions Beyond Lipschitz Domains]{Approximation and Bounded Plurisubharmonic Exhaustion Functions Beyond Lipschitz Domains}
\begin{abstract}
	Using techniques from the analysis of PDEs to study the boundary behaviour of functions on domains with low boundary regularity, we extend results by Forna\ae{}ss-Wiegerinck (1989) on plurisubharmonic approximation and by Demailly (1987) on the existence on bounded plurisubharmonic exhaustion functions to domains beyond Lipschitz boundary regularity. 
\end{abstract}

\subjclass[2010]{Primary 32U05, 32U10; Secondary 31B25}

\keywords{plurisubharmonic functions, approximation, continuous boundary, bounded exhaustion function, hyperconvexity, log-lipschitz, boundary regularity}

\maketitle

\section{Introduction}
Many results of complex analysis in several varibles have been stated and proved in the realm of smooth domains, and little interest has been directed towards domains with lower boundary regularity. In the study of PDEs, on the other hand, much effort has been put into extending results from smooth domains, into Lipschitz domains and beyond. The aim this paper is to use some of these techniques to extend two well-known results on plurisubharmonic functions to domains beyond Lipschitz boundaries. Common for both theorems is that it is the notion of boundary regularity, rather than idea of proof that sets the limits for the theorem.

The first theorem concerns the uniform approximation of continuous plurisubharmonic functions on a domain with $C^0$ boundary by continuous plurisubharmonic functions defined in neighborhoods of the closure of the domain. The proof relies on the fact that we can extend plurisubharmonic functions across a continuous boundary. This is possible because there exists a way to translate a piece of the boundary an arbitrary small amount, without it crossing the untranslated part. This property is shared among all domains which are given by graphs, but not by all fractal domains.

 The other main result is concerned with hyperconvexity. We asked the question what, in terms of boundary regularity, do we need to impose on a pseudoconvex domain in addition to be able to conclude the existence of a bounded plurisubharmonic exhaustion function? We extend a previous result by Demailly, \cite{Dem}, from Lipschitz domains to a class of domains that includes all Log-Lipschitz domains, by carefully studying the asymptotics of the proof in \cite{Dem}.

We hope that these results will spark some interest in complex analysis for domains with low regularity.

To properly state our results we need to introduce some notation. Points in Euclidean $n$-space $ \R^{n} $ are denoted by $ x = ( x_1, \dots, x_n) $ or $ ( x', x_n ) $ where $ x' = ( x_1, \dots, x_{n-1} ) \in \R^{n - 1}$, and and we denote by $\{e_j\}_{j=1}^n$ the standard basis of $\mathbb{R}^n$. Let $ \bar E, \partial E, \diam E $ be the closure, boundary, and diameter of $E.$ Let $ | x | = (x \cdot x )^{1/2}, $ be the Euclidean norm of. Given $ x \in \R^{n}$ and $r >0$, let $ B (x, r ) = \{ y \in \R^{n} : | x - y | < r \}$, and let $B'(x,r) = \{ y' \in \R^{n-1} : | x' - y' | < r \}$. Given $ E, F \subset \R^n, $ let $ \delta ( E, F ) $ be the Euclidean distance from $ E $ to $ F $. In case $ E = \{y\}, $ we write $ \delta ( y, F ) $ and if it is obvious which set $F$ we mean, then we simply write $\delta(y)$.
If $ O \subset \R^{n} $ is open and $ k = 0,1,\ldots$ and $\gamma \in [0,1]$ then by $ C^{k ,\gamma} ( O ), $ we denote the space of $k$ times continuously differentiable functions, such that all partial derivatives up to order $k$ are H\"older continuous with exponent $\gamma$, $\gamma = 0$ means that we just have a continuous function, $\gamma = 1$ is usually called Lipschitz continuous. All these definitions makes sense in $\C^n$ with the canonical identification $\C^n \simeq \R^{2n}$.

Let $\PSH(\Omega)$ denote the class of pluri\-sub\-harm\-o\-n\-ic functions on a bounded domain $\Omega$ in $\C^n$, and $\PSH(\overline \Omega)$ the class of function plurisubharmonic on a neighbourhood of $\Omega$. 
Let $\ddc = 2i \partial \bar \partial$, if $u \in \PSH$, we will take $\ddc u$ in a distributional sense.
We say that a domain $\Omega \subset \C^n$ is pseudoconvex, if there exists $\phi \in \PSH(\Omega) \cap C(\Omega)$ such that $\phi(z) \to + \infty$ as $z \to \partial \Omega$, we call $\phi$ a \emph{plurisubharmonic exhaustion function}.
We say that $\phi \in \PSH(\Omega) \cap C(\bar \Omega)$ is a \emph{bounded plurisubharmonic exhaustion function} if  $\phi < 0$ in $\Omega$ and $\phi = 0$ on $\partial \Omega$. If there exists such an exhaustion function, $\Omega$ is often said to be hyperconvex. Note that all hyperconvex domains are pseudoconvex.

\section{Main Results} 
\label{sec:main_results}
In this section we state in detail the main results of the paper. The exact definitions of the the different boundary regularities in the statements of the theorems are postponed to Section \ref{sec:domains_with_low_regularity_rewrite_}.
\subsection{The PSH-Mergelyan property in $C^0$ domains}
In 1987, Sibony \cite[Theorem 2.2]{sibony} proved that if $\Omega$ is a $C^\infty$ smooth pseudoconvex domain, it is possible to approximate functions in $\PSH(\Omega) \cap C(\overline \Omega)$ with functions in $\PSH(\overline \Omega) \cap C^\infty(\overline \Omega)$. This approximation property corresponds to the well-known Mergelyan property, but for plurisubharmonic functions instead of holomorphic functions. In the same paper Sibony also asked the question whether his result could be extended to pseudoconvex $C^1$ domains.

In 1989, Forn\ae ss and Wiegerinck \cite[Theorem 1]{for_wieg} proved that this is indeed the case, and that the domain $\Omega$, does not even need to be pseudoconvex.

In this paper we prove the following even stronger result.
\begin{theorem} \label{theorem:c0bdry}
Let $\Omega \subset \C^n$ be a bounded domain with $C^0$-boundary. Then every function in $\PSH(\Omega) \cap C(\overline \Omega)$ can be uniformly approximated on $\bar\Omega$ by functions in $\PSH(\overline \Omega) \cap C^\infty(\overline \Omega)$.
\end{theorem}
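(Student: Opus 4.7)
The plan is to extend $u$ plurisubharmonically to a neighbourhood of $\bar\Omega$ and then convolve with a standard smooth radial mollifier. The extension will be built by translating $u$ locally in directions transverse to $\partial\Omega$ and patching the resulting translates by a pointwise maximum, the $C^0$-graph hypothesis being precisely what guarantees that these pieces cover a full neighbourhood of $\bar\Omega$.

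First I would cover $\partial\Omega$ by finitely many open balls $U_1,\ldots,U_N$ in which, after a real-orthogonal change of coordinates, $\Omega \cap U_j = \{(x',x_{2n}) : x_{2n} > \phi_j(x')\}$ for a continuous $\phi_j$, and let $v_j$ be the corresponding inward coordinate direction. For small $\epsilon > 0$ the translated function
\[
u_j^\epsilon(z) := u(z + \epsilon v_j)
\]
is plurisubharmonic (translation preserves plurisubharmonicity) and defined on $\Omega - \epsilon v_j$, which, inside a shrunk $U_j' \Subset U_j$, contains an open neighbourhood $V_j$ of $\bar\Omega \cap \overline{U_j'}$. After adding an auxiliary interior set $V_0 \Subset \Omega$ with $u_0^\epsilon := u$ on $V_0$, chosen so that $W := V_0 \cup \bigcup_{j \geq 1} V_j$ is an open neighbourhood of $\bar\Omega$, I would glue by setting
\[
\tilde u^\epsilon(z) := \max_{0 \leq j \leq N} u_j^\epsilon(z), \qquad z \in W,
\]
with each $u_j^\epsilon$ extended by $-\infty$ outside $V_j$. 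Because every point of $W$ lies in some open $V_j$ on which $u_j^\epsilon$ is continuous, $\tilde u^\epsilon$ is plurisubharmonic on $W$ and continuous on $\bar\Omega$; moreover, the uniform continuity of $u$ on $\bar\Omega$ forces $|u_j^\epsilon(z) - u(z)| \leq \omega_u(\epsilon)$ whenever both are finite, so that $|\tilde u^\epsilon - u| \leq \omega_u(\epsilon)$ uniformly on $\bar\Omega$.

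With $\tilde u^\epsilon$ in hand, I would convolve with a smooth radial mollifier $\chi_\rho$: for $\rho$ small enough that $\{z \in W : B(z,\rho) \subset W\}$ still contains $\bar\Omega$, the convolution $\tilde u^\epsilon * \chi_\rho$ lies in $\PSH(\bar\Omega) \cap C^\infty(\bar\Omega)$ and converges uniformly to $\tilde u^\epsilon$ on $\bar\Omega$ as $\rho \to 0$, by continuity of $\tilde u^\epsilon$ there. Choosing first $\epsilon$ and then $\rho$ small then yields the required uniform approximants of $u$.

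The main obstacle will be the patching step. The local translates are easy, but combining them into a single plurisubharmonic function on an actual open neighbourhood of $\bar\Omega$ requires coordinating the cover $\{U_j'\}$, the inward directions $\{v_j\}$, and the translation parameter $\epsilon$ so that every point of the enlarged open set $W$ still lies in the finite-valued domain of at least one $u_j^\epsilon$, while at the same time keeping the maximum continuous across $\bar\Omega$. This is precisely where the $C^0$-graph hypothesis is used: it provides local transverse directions that are uniformly inward across each chart, an ingredient which, as remarked in the introduction, is unavailable on fractal boundaries.
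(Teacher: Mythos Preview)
Your overall strategy matches the paper's: translate $u$ locally along the inward graph directions, patch by a pointwise maximum, then mollify. The gap is in the patching step, and you in fact flag it yourself in your last paragraph without resolving it.

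Taking the naive maximum
\[
\tilde u^\epsilon(z) = \max_{0\le j\le N} u_j^\epsilon(z),
\]
with $u_j^\epsilon$ extended by $-\infty$ outside $V_j$, does \emph{not} in general produce a plurisubharmonic (or even upper semicontinuous) function on $W$. Consider a point $z_0\in W\cap\partial V_j$. Approaching $z_0$ from inside $V_j$, the value $u_j^\epsilon$ is finite and may well be the maximiser, since all you know is that every translate differs from $u$ by at most $\omega_u(\epsilon)$; there is no reason for any other $u_k^\epsilon$ to dominate $u_j^\epsilon$ there. At $z_0$ itself $u_j^\epsilon$ has been declared $-\infty$, so the maximum can drop by up to $2\omega_u(\epsilon)$. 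Thus $\tilde u^\epsilon$ can fail upper semicontinuity on $W\setminus\bar\Omega$, and the claim ``because every point of $W$ lies in some open $V_j$ on which $u_j^\epsilon$ is continuous, $\tilde u^\epsilon$ is plurisubharmonic on $W$'' is not justified.

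The paper's proof repairs exactly this. It builds, for each chart, a smooth cutoff $\xi_j$ with $-1\le\xi_j\le0$, equal to $0$ on a compact set $K_j$ covering the overlap region and equal to $-1$ outside the chart $B_j$, chosen so that $\xi_j+c|z|^2$ is plurisubharmonic for a fixed $c$. One then glues
\[
f_j(z)=\varphi_j(z)+3\,\omega(\nu)\bigl(\xi_j(z)+c|z|^2\bigr),
\]
and takes $v=\max_j f_j$. The $3\omega(\nu)$-weighted bump forces, at any $z\in\partial B_j$, the strict inequality $f_j(z)<f_k(z)$ for the chart $k$ whose $K_k$ contains $z$; hence in a neighbourhood of $z$ the maximum is realised only by functions that are genuinely plurisubharmonic there, and the discontinuity of the $-\infty$ extension is never seen. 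This correction term is the missing ingredient in your proposal; without it the patched function need not be plurisubharmonic on any neighbourhood of $\bar\Omega$, and the mollification step cannot proceed.
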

\begin{note}
For more on approximations of this type, see \cite{Hed} and \cite{G}.
\end{note}

The proof of the theorem closely follows  that of \cite{for_wieg}, but with more explicit calculations. This eventually enables us to estimate the relation between the exactness of our approximation and the size of the domains on which the approximants are defined.

\begin{corollary} \label{cormerg}
	Let $\Omega \subset \C^n$ be a bounded domain with $C^0$-boundary. Assume that there exists a function $f: \mathbb{R}^+ \to \mathbb R$ and constants $c \geq 1$ and $\epsilon_1 < 1$ such that we have the following estimate on the distance function
	\begin{equation*}
		\delta(z,\partial \Omega) + f(\epsilon) \leq \delta(z+\epsilon w_j, \partial \Omega),
	\end{equation*}
	when $z \in B(x_j,r_j/c)$, $0 < \epsilon < \epsilon_1$ for some $j$, and $(x_j,r_j)$ are given by Definition \ref{defckgamma}, and $w_j \in \partial B(0,1)$ is the local direction given by the graph ($e_{2n}$ in local coordinates).
	Then there exists a constant $0 < \epsilon_0(\Omega,c,\epsilon_1) \leq \epsilon_1$ such that if $\phi \in \PSH(\overline \Omega) \cap C(\overline \Omega)$, and $0 < \epsilon < \epsilon_0$ are given, we can find a function $\psi$ and a domain $U_\psi \Supset \Omega$ such that $\psi\in\PSH(U_\psi)\cap C(U_\psi)$, and there exists a constant $C = C(\diam (\Omega),\Omega,c) > 1$ such that
	\begin{equation*}
		|\psi(z)-\phi(z)| \leq C\, \omega(\epsilon), \quad \forall z \in \overline \Omega,
	\end{equation*}
	where $\omega:\R^+ \to \R^+$ is the modulus of continuity of $\phi$ on $\overline \Omega$.
	Moreover we know that 
	\begin{equation*}
		f(\epsilon) < \delta(\Omega,\partial U_\psi).
	\end{equation*}
\end{corollary}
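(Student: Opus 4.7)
The plan is to revisit the proof of Theorem \ref{theorem:c0bdry} with explicit bookkeeping, replacing the qualitative ``translate by some small amount'' step with the quantitative distance bound supplied by the hypothesis. First I would unpack the $C^0$-boundary cover from Definition \ref{defckgamma}, producing finitely many balls $B(x_j,r_j)$, unit directions $w_j\in\partial B(0,1)$ and the smaller covering $\{B(x_j,r_j/c)\}$ of $\partial\Omega$. On each patch I would set $\phi_j(z):=\phi(z+\epsilon w_j)$. The hypothesis
\[
	\delta(z,\partial\Omega)+f(\epsilon)\leq \delta(z+\epsilon w_j,\partial\Omega)
\]
forces $z+\epsilon w_j\in\Omega$ whenever $z\in B(x_j,r_j/c)$ satisfies $\delta(z,\partial\Omega)>-f(\epsilon)$ (reading $\delta$ as signed). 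Hence $\phi_j$ is plurisubharmonic and continuous on an open set $V_j$ that contains an $f(\epsilon)$-thick collar just outside $\bar\Omega\cap B(x_j,r_j/c)$, and for $z\in\bar\Omega\cap V_j$ the modulus of continuity of $\phi$ gives $|\phi_j(z)-\phi(z)|\leq\omega(\epsilon)$ at once.

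Next I would glue the $\phi_j$ to $\phi$ through the regularized-max / partition-of-unity construction of \cite{for_wieg}. Choosing cutoffs $\chi_j$ subordinate to $\{B(x_j,r_j/c)\}$, I would shift each $\phi_j$ down by a carefully chosen constant $a_j=O(\omega(\epsilon))$ so that $\phi_j-a_j$ dominates $\phi-\omega(\epsilon)/2$ in a narrow boundary shell on patch $j$ but is dominated by it away from $\supp\chi_j$, and then form a regularized maximum
\[
	\psi:=\max\nolimits_{\eta}\bigl(\phi-\omega(\epsilon)/2,\;\phi_1-a_1,\dots,\phi_N-a_N\bigr)
\]
on $U_\psi:=\Omega\cup\bigcup_j V_j$, with each entry understood to be omitted from the max on the part of $U_\psi$ where it is not defined. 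By construction $\psi\in\PSH(U_\psi)\cap C(U_\psi)$, and $U_\psi$ contains an $f(\epsilon)$-collar of $\partial\Omega$ so that $f(\epsilon)<\delta(\Omega,\partial U_\psi)$. The pointwise estimates on each entry in the max yield $|\psi-\phi|\leq C\,\omega(\epsilon)$ on $\bar\Omega$, with $C$ depending only on $N$ and on the overlap of the cover, hence only on $\diam\Omega$, $\Omega$ and $c$.

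The main obstacle, and the only part that requires real care, is calibrating the shifts $a_j$ and the regularization parameter $\eta$ uniformly in $\epsilon$, so that the regularized maximum selects $\phi_j-a_j$ throughout the full $f(\epsilon)$-collar inside patch $j$ (so that $\psi$ genuinely extends across $\partial\Omega$ there) while selecting $\phi-\omega(\epsilon)/2$ deep inside $\Omega$ (so that the approximation bound survives). This is precisely the asymptotic tracking through the Forn\ae{}ss--Wiegerinck construction that Theorem \ref{theorem:c0bdry} already performs qualitatively; one fixes $\epsilon_0(\Omega,c,\epsilon_1)\leq\epsilon_1$ small enough that the patch collars fit together consistently and that $a_j\leq$ a fixed multiple of $\omega(\epsilon)$ for all $j$ and all $\epsilon<\epsilon_0$. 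No new idea is introduced beyond the proof of the theorem; the content of the corollary is the quantitative bookkeeping relating the approximation error $C\omega(\epsilon)$ to the depth $f(\epsilon)$ of the extension domain.
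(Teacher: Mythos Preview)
Your overall strategy---rerun the construction of Theorem~\ref{theorem:c0bdry} with $\nu=\epsilon$, use the distance hypothesis to see that each translate $\phi_j(z)=\phi(z+\epsilon w_j)$ is defined on an $f(\epsilon)$-collar outside $\bar\Omega\cap B(x_j,r_j/c)$, and read off the quantitative estimates---is exactly how the corollary follows from Section~\ref{sec:pshmerg}; the paper gives no separate argument.

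The gluing step, however, has a real gap. Constant shifts $a_j$ cannot do what you claim. On $\bar\Omega\cap V_j$ one only knows $|\phi_j-\phi|\le\omega(\epsilon)$ \emph{uniformly}, so whatever constant $a_j$ you choose, the comparison between $\phi_j-a_j$ and $\phi-\omega(\epsilon)/2$ is the same near $\partial\Omega$ as it is near the edge of the patch: you cannot make $\phi_j-a_j$ dominate in the collar while being dominated at $\partial V_j$. Dropping an entry from the max where it is undefined then produces a discontinuity, and $\psi$ fails to be plurisubharmonic on $U_\psi$. The cutoffs $\chi_j$ you mention but never use cannot simply multiply the $\phi_j$ either, as that destroys plurisubharmonicity. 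The actual Forn\ae{}ss--Wiegerinck device, carried out explicitly in Section~\ref{sec:pshmerg}, replaces your constant shift by the spatially varying plurisubharmonic correction
\[
	f_j(z)=\phi_j(z)+3\,\omega(\epsilon)\bigl(\xi_j(z)+c\,|z|^2\bigr),
\]
where $-1\le\xi_j\le0$ is a smooth bump equal to $0$ on a compact $K_j\subset B_j$ and $-1$ outside $B_j$, and $c\approx\epsilon_w^{-2}$ is chosen so that $\xi_j+c|z|^2$ is plurisubharmonic. At $\partial B_j$ one has $\xi_j=-1$, while on the overlapping patch $k$ one has $\xi_k=0$; the factor $3\omega(\epsilon)$ beats the $2\omega(\epsilon)$ oscillation of $\phi_j-\phi_k$, so $f_k$ strictly dominates there and $f_j$ may be discarded from the max locally. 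With this correction in place, $v=\max_j f_j$ is psh and continuous on the neighbourhood $U$, the estimate $|v-\phi|\le\omega(\epsilon)\bigl[1+C\diam(\Omega)\bigr]$ gives the constant $C=C(\diam(\Omega),\Omega,c)$ of the statement, and your $f(\epsilon)$-collar argument for $\delta(\Omega,\partial U_\psi)>f(\epsilon)$ goes through unchanged.
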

\begin{note}
	Since we know a lower bound on the distance between $\partial U_\psi$ and $\Omega$, we can mollify $\psi$ with a kernel of radius $f(\epsilon)/4$, hence obtaining a smooth approximation in $\hat U_\psi$, such that $d(\Omega, \partial \hat U_\psi) > f(\epsilon)/2$.
\end{note}
\subsection{Hyperconvexity in Log-Lipschitz domains and beyond}

The first result on the existence of bounded plurisubharmonic exhaustion functions is due to Diederich and Forna\ae{}ss. In 1977, \cite{DF}, they showed that any bounded pseudoconvex domain $\Omega \in \C^n$ with $C^r$ boundary, $r \geq 2$ has a $C^r$ defining function $\rho$ for which $-(-\rho)^\eta$ is a strictly plurisubharmonic bounded exhaustion function on $\Omega$ for any $0 < \eta$, small enough. They also asked if one can find a bounded plurisubharmonic exhaustion in the case of pseudoconvex $C^1$ domains. This was answered in 1981, when Kerzman-Rosay, \cite{KR}, proved that pseudoconvex $C^1$ domains are hyperconvex. This result was later generalised to Lipschitz domains by Demailly, \cite{Dem}, in 1987. For other results related to this question, see \cite{Ra}, \cite{Har} and \cite{SO}.

Elaborating on Demailly's proof, and keeping track of the dependence of the various estimates, we are able to extend Demailly's result to domains beyond Lipschitz regularity:

\begin{theorem} \label{thmhypconvex}
	Let $\Omega \subset \C^n$ be a bounded domain with $C^0$-boundary, and furthermore assume that there exists a $c \geq 1$ and $\epsilon_1 < 1$ such that we have the following estimate on the distance function
	\begin{equation*}
		\delta(z,\partial \Omega) + f(\epsilon) \leq \delta(z+\epsilon w_j, \partial \Omega)\leq \delta (z,\partial \Omega) + \epsilon,
	\end{equation*}
	when $z \in B(x_j,r_j/c)$, $0 < \epsilon < \epsilon_1$ for some $j$, where $(x_j,r_j)$ are given by Definition \ref{defckgamma}, and $w_j \in \partial B(0,1)$ is the local direction given by the graph ($e_{2n}$ in local coordinates).
	Where $f:\R_+ \to \R_+$ is a continuous function which satisfies the following limits
	\begin{eqnarray}
			\frac{\log \frac{\epsilon}{f(\epsilon)}}{\log \frac{1}{\epsilon}} &=& \omega(\epsilon)>0, \quad \text{for $0 < \epsilon < \epsilon_1$,} \label{eqfassump} \\
			\lim_{\epsilon \to 0^+} \omega(\epsilon) &=& 0. \notag
	\end{eqnarray}
	Then if $\Omega$ is pseudoconvex, it is also hyperconvex.
	Moreover there exists an $0 < \epsilon_0(\Omega,c,\epsilon_1) \leq \epsilon_1$, and a bounded plurisubharmonic exhaustion function $w \in C(\overline \Omega)$, which satisfies the following bound
	\begin{equation}
		\label{eqexhaustbddlow}
		-\frac{ \log(2)}{\log(1/\delta(z))} - C_1 \omega(\delta(z)) \leq w(z), \quad \text{for $\delta(z) \leq \epsilon_0$},
	\end{equation}
	for a constant $C_1(\Omega,c,\epsilon_1)>0$. If we in addition assume that $f$ is an increasing function we obtain the following upper bound.
	\begin{equation}
		\label{eqexhaustbddhigh}
		w(z) \leq \frac{\log \frac{f(\epsilon_0)}{\delta(z) + f(\epsilon_0)} }{\log 1/\epsilon_0}, \quad \text{for $\delta(z) < \epsilon_0$}.
	\end{equation}
\end{theorem}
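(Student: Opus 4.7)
My plan is to follow Demailly's Lipschitz scheme while carefully tracking how the weaker quantity $f(\epsilon)$ (in place of $c\epsilon$) propagates through the estimates. The central classical fact I would exploit is that $-\log \delta(\cdot, \partial D)$ is PSH on any pseudoconvex $D$; I apply this to translated copies of $\Omega$ in which the hypothesis prescribes exactly how far the boundary has moved.

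First, I cover $\partial \Omega$ by finitely many balls $B(x_j, r_j/c)$ from Definition \ref{defckgamma}. For $0 < \epsilon < \epsilon_1$ and each $j$, set $\Omega_j^\epsilon := \Omega - \epsilon w_j$; this is pseudoconvex because translation is biholomorphic, and $\delta(z, \partial \Omega_j^\epsilon) = \delta(z + \epsilon w_j, \partial \Omega)$. The two-sided hypothesis then gives, on $\Omega \cap B(x_j, r_j/c)$,
\[
\delta(z) + f(\epsilon) \leq \delta(z, \partial \Omega_j^\epsilon) \leq \delta(z) + \epsilon.
\]
Define the normalised candidate
\[
w_j^\epsilon(z) := \frac{\log f(\epsilon) - \log \delta(z, \partial \Omega_j^\epsilon)}{\log(1/\epsilon)} \in \PSH(\Omega_j^\epsilon),
\]
which inherits from the two-sided estimate the bounds
\[
\frac{\log f(\epsilon) - \log(\delta(z) + \epsilon)}{\log(1/\epsilon)} \leq w_j^\epsilon(z) \leq \frac{\log\bigl(f(\epsilon)/(\delta(z) + f(\epsilon))\bigr)}{\log(1/\epsilon)} \leq 0
\]
on $\Omega \cap B(x_j, r_j/c)$. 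Next, using Richberg's regularised maximum applied to the finite family $\{w_j^\epsilon\}_j$ together with any continuous PSH exhaustion of $\Omega$ (available because $\Omega$ is pseudoconvex) used as a sufficiently negative interior floor that does not interfere near $\partial \Omega$, I would glue these local pieces into a global $W^\epsilon \in \PSH(\Omega)$ inheriting the same upper envelope near $\partial \Omega$. The final exhaustion is then
\[
w(z) := \Bigl(\sup_{0 < \epsilon \leq \epsilon_0} W^\epsilon(z)\Bigr)^{*}.
\]
The upper bound \eqref{eqexhaustbddhigh} follows by choosing $\epsilon = \epsilon_0$ (monotonicity of $f$ ensures the supremum is essentially controlled by this largest value). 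For \eqref{eqexhaustbddlow}, given $z$ with $\delta(z) \leq \epsilon_0$ I take $\epsilon = \delta(z)$ in the lower bound on $w_j^\epsilon$ for a chart containing $z$ and simplify:
\[
\frac{\log f(\delta(z)) - \log(2\delta(z))}{\log(1/\delta(z))} = -\frac{\log 2}{\log(1/\delta(z))} - \omega(\delta(z)),
\]
recovering \eqref{eqexhaustbddlow} after absorbing the finitely many patch corrections into $C_1$. Hyperconvexity, i.e.\ $w \to 0$ at $\partial \Omega$ with $w < 0$ on $\Omega$, then follows from the two-sided bounds combined with $\omega(\epsilon) \to 0$.

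The hard part will be the gluing/envelope step. One must verify (a) that the regularised maximum across charts does produce a globally PSH function inheriting the sharp local upper bound uniformly in $\epsilon$, even at boundary points in overlaps of several charts or on the translated side of a chart, where $w_j^\epsilon$ may misbehave outside $\Omega \cap B(x_j, r_j/c)$; and (b) that the supremum over $\epsilon$ preserves both the upper bound at $\epsilon_0$ and the continuity of $w$ on $\bar\Omega$ required for the exhaustion. The monotonicity assumption on $f$ in the second half of the statement is used precisely to make (b) tractable, while the assumption $\omega(\epsilon) \to 0$ is what forces both bounds to quench to zero at the boundary at comparable rates.
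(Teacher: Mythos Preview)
Your overall scheme matches the paper's (and Demailly's): translate, use $-\log\delta$ on the translates, normalise by $\log(1/\epsilon)$, glue over the finite cover, then take a supremum in $\epsilon$; your choice $\epsilon=\delta(z)$ for the lower bound and your use of the monotonicity of $f$ for the upper bound are exactly what the paper does.

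The gap is in the gluing, and the mechanism you propose does not work as written. A plurisubharmonic exhaustion of a pseudoconvex $\Omega$ tends to $+\infty$ on $\partial\Omega$, so it cannot serve as a ``sufficiently negative interior floor that does not interfere near $\partial\Omega$''; it would interfere precisely there (and using a bounded exhaustion instead would be circular). Richberg's regularised maximum alone also cannot patch functions defined only on different open sets: each local piece must still fall, at the edge of its own chart inside $\Omega$, below some globally defined plurisubharmonic competitor. The paper arranges this explicitly by adding to each $v_{\epsilon,j}=-\log\delta(z+\epsilon w_j)$ a cut-off $\psi_j\in C_0^\infty(B(x_j,r_j/2))$ equal to $\log(\epsilon/f(\epsilon))$ on an inner ball, plus $\lambda|z|^2$ with $\lambda=C\log(\epsilon/f(\epsilon))$ to restore plurisubharmonicity, and then takes the ordinary maximum against the global floor $\lambda|z|^2-\gamma\lambda$. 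The essential sub-Lipschitz point---which your phrase ``absorbing the finitely many patch corrections into $C_1$'' hides---is that both corrections have size $\log(\epsilon/f(\epsilon))$, so after division by $\log(1/\epsilon)$ they contribute exactly $O(\omega(\epsilon))$; the hypothesis $\omega(\epsilon)\to 0$ is what makes the patching compatible with the boundary limit. One smaller issue: the paper obtains $w\in C(\bar\Omega)$ not via the usc regularisation $(\sup)^*$, which a priori yields only upper semicontinuity, but by arguing that near each fixed $z$ the supremum over $\epsilon$ is attained on an interval bounded away from $0$ and then invoking compactness.
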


If we additionally assume that $\Omega$ is a Log-Lipschitz domain, we also obtain estimates on the Levi form of the exhaustion function.

\begin{corollary} \label{corhypconvex}
	If $\Omega$ is a pseudoconvex Log-Lipschitz domain, then it is hyperconvex. Moreover there exists a constant $0 < \epsilon_0(\Omega,\|\Omega\|_{LL}) < 1$, and a smooth bounded plurisubharmonic exhaustion function $w$ satisfying \eqref{eqexhaustbddlow}, \eqref{eqexhaustbddhigh}, such that
	\begin{equation*}
		\ddc w(z) \geq C \frac{\log \big [\tilde C_1 \log \frac{1}{\delta(z)} \big]}{\log \frac{1}{\delta(z)} } \ddc |z|^2,
	\end{equation*}
	with a constant $C(\Omega,\|\Omega\|_{LL}) > 1$, for $\delta(z) < \epsilon_0$.
\end{corollary}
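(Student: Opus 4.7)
The plan is to deduce Corollary \ref{corhypconvex} from Theorem \ref{thmhypconvex} by first verifying the distance hypothesis for Log-Lipschitz domains with an explicit $f$, then extracting the Levi form bound from the construction used in proving Theorem \ref{thmhypconvex}, and finally regularizing to smoothness.

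First I would work in local graph coordinates, writing a Log-Lipschitz piece of $\partial\Omega$ as $\{y_{2n}=g(y')\}$ with $|g(x')-g(y')|\leq \|\Omega\|_{LL}\,|x'-y'|\,\log(1/|x'-y'|)$ for $|x'-y'|$ small. Given $z$ inside the domain with $\delta(z)$ small, the closest boundary point sits in a cone whose aperture is controlled by $\|\Omega\|_{LL}\log(1/\delta(z))$; a direct computation of how $\delta$ varies under a vertical translation then yields
\[
\delta(z,\partial\Omega)+\frac{c\,\epsilon}{\log(1/\epsilon)}\leq \delta(z+\epsilon\,e_{2n},\partial\Omega)\leq \delta(z,\partial\Omega)+\epsilon,
\]
for $0<\epsilon<\epsilon_1(\|\Omega\|_{LL})$ and $z$ in a suitable neighborhood; the right-hand inequality is just the triangle inequality, so the content is in the left. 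Hence I may take $f(\epsilon)=c\epsilon/\log(1/\epsilon)$, which is continuous, positive, and (for small $\epsilon$) increasing, and satisfies
\[
\frac{\log(\epsilon/f(\epsilon))}{\log(1/\epsilon)}=\frac{\log\log(1/\epsilon)+O(1)}{\log(1/\epsilon)}\to 0 \quad\text{as } \epsilon\to 0^{+}.
\]
Theorem \ref{thmhypconvex} then applies and produces a $w\in C(\overline\Omega)$ fulfilling \eqref{eqexhaustbddlow} and \eqref{eqexhaustbddhigh}.

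For the Levi form bound I would unpack the construction of $w$ in the proof of Theorem \ref{thmhypconvex}. Following Demailly, one expects $w$ to be built as a regularized maximum of local plurisubharmonic models of the form $-1/\log(1/(\delta+\tau))$, rendered strictly plurisubharmonic by additive correctors of the shape $A\,\omega(\delta)\,|z|^{2}$ whose coefficient is dictated by the exponent $\omega$ coming from the hypothesis. The $\ddc|z|^{2}$ contribution survives the regularized max and gives
\[
\ddc w(z)\geq C\,\omega(\delta(z))\,\ddc|z|^{2}.
\]
Since $\omega(\delta)\asymp \log\log(1/\delta)/\log(1/\delta)$ in the Log-Lipschitz case, this is exactly the claimed estimate, with $\tilde C_{1}$ absorbing the implicit multiplicative constants that appear inside the outer logarithm.

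Finally, for smoothness I would either use a regularized maximum $\max_\eta$ throughout the construction (it is $C^{\infty}$, plurisubharmonic, and only strengthens the Levi inequality), or apply the mollification procedure described in the remark after Corollary \ref{cormerg}, mollifying at scale $\sim f(\delta(z))$ inside a neighborhood $U\Supset\Omega$ provided by the theorem. The main obstacle I anticipate is the bookkeeping for the Levi form estimate: the strictly plurisubharmonic coefficient $\omega(\delta)$ must be carried through the regularized max and the mollification without losing the $\log\log/\log$ factor, and one has to verify that the positive Hessian contribution from the $A\omega(\delta)|z|^{2}$ correctors dominates the (negative) Hessian of the $-1/\log$ model uniformly for $\delta(z)<\epsilon_{0}$.
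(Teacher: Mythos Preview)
Your overall plan matches the paper's: verify that Log-Lipschitz domains satisfy the hypotheses of Theorem~\ref{thmhypconvex} with an explicit $f$, extract the Levi-form lower bound from the construction of $w$, then regularize to smoothness. Replacing the Lambert-$W$ expression by $f(\epsilon)=c\epsilon/\log(1/\epsilon)$ is harmless, since $-W_{-1}(-\epsilon/C)\asymp\log(1/\epsilon)$; the paper in fact makes this substitution too.

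There is, however, a genuine gap in how you obtain the Levi-form bound. In the construction, $w=\sup_{0<\epsilon\leq\epsilon_0} w_\epsilon$, and the strictly plurisubharmonic part of each $w_\epsilon$ is $\tfrac{\lambda(\epsilon)}{\log(1/\epsilon)}|z|^2=C\,\omega(\epsilon)\,|z|^2$, so $\ddc w_\epsilon\geq C\,\omega(\epsilon)\,\ddc|z|^2$. The coefficient depends on the \emph{parameter} $\epsilon$, not on $\delta(z)$. Since $\omega(\epsilon)\asymp\log\log(1/\epsilon)/\log(1/\epsilon)$ is decreasing as $\epsilon\to 0^+$, passing from $\omega(\epsilon)$ to $\omega(\delta(z))$ requires knowing that the supremum is \emph{not} attained for $\epsilon\ll\delta(z)$. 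The paper devotes the bulk of its proof of the corollary to exactly this: it shows that for $\epsilon<\hat c\,\delta(z)$ one has $w_\epsilon(z)\leq w_{\epsilon_0}(z)$ by comparing the two-sided bounds on $w_\epsilon$ (the inequality \eqref{eqeps0}), using the explicit Log-Lipschitz form of $f$. Only then does $\ddc w\geq C\,\omega(\delta(z))\,\ddc|z|^2$ follow. Your sketch writes the corrector as $A\,\omega(\delta)|z|^2$ from the outset, which hides precisely this issue.

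For the smoothing, neither of your suggestions works as stated. Mollification needs $w$ to be defined on a neighbourhood of $\overline\Omega$, which it is not (the remark after Corollary~\ref{cormerg} concerns a different situation). A regularized maximum handles the finite $\max_j$, but $w$ is a supremum over the \emph{continuous} family $\{w_\epsilon\}_{0<\epsilon\leq\epsilon_0}$, so $\max_\eta$ does not directly apply. The paper instead uses Richberg's theorem \cite[Thm~5.21]{Dembook}: given a continuous strictly plurisubharmonic $w$ with $\ddc w\geq\gamma$ and any continuous $\lambda>0$, there is a $C^\infty$ plurisubharmonic $\tilde w$ with $w\leq\tilde w\leq w+\lambda$ and $\ddc\tilde w\geq(1-\lambda)\gamma$. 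Taking $\lambda=-w/2$ preserves both \eqref{eqexhaustbddlow}, \eqref{eqexhaustbddhigh} and the Levi-form bound up to a harmless factor.
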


\section{A primer on boundary regularity} 
\label{sec:domains_with_low_regularity_rewrite_}

In complex analysis of several variables, the usual way of describing the regularity of a domain in $\Cn$, is using a defining function. To be precise
\begin{definition}
	A \emph{domain} is a connected open set.
\end{definition}
\begin{definition} \label{defck}
	A domain $\Omega \subset \Cn$ with boundary $\partial \Omega$ is said to have $C^k$ boundary, $k=1,2,\ldots$, if there is a $k$ times continuously differentiable function $\rho$ on a neighbourhood $U$ of $\partial \Omega$ such that
	\begin{enumerate}
		\item $\Omega \cap U = \{z \in U: \rho(z) < 0 \}$,
		\item $\nabla \rho \neq 0$ on $\partial \Omega$.
	\end{enumerate}
\end{definition}
This definition always provides you with a domain whose boundary is a $C^k$ manifold, however when $k < 1$, it no longer makes sense to talk about a defining function, since we cannot make a requirement on its gradient anymore, as such we need to look away from the above definition. We could consider the following common definition
\begin{definition} \label{defckgammaclass}
	A bounded domain $\Omega \subset \R^n$ is said to be of class $C^{k,\gamma}$, $k = 0,1,2, \ldots$, $\gamma \in (0,1]$, if for each point $p \in \partial \Omega$, there exists a radius $r > 0$ and a map $\phi_{p}: B(p,r) \to \R^n$ such that
	\begin{enumerate}
		\item $\phi_p$ is a bijection,
		\item $\phi_p$ and $\phi^{-1}_p$ are $C^{k,\gamma}$ functions, 
		\item $\phi_p(\partial \Omega \cap B(p,r)) \subset \{x \in \R^n | x_n = 0\}$,
		\item $\phi_p(\Omega \cap B(p,r)) \subset \{x \in \R^n | x_n > 0 \}$.
	\end{enumerate}
\end{definition}

Although this definition allows us to 'go below' $k = 1$, it has some serious drawbacks. When $k = 0$ and $\gamma < 1$, a domain of class $C^{k,\gamma}$, can exhibit unwanted behaviour, as the example 'Rooms and Passages' by Fraenkel shows, \cite[p.389]{fraenkel}. In the 'Rooms and Passages' example, the Sobolev embedding fails to hold, but also the segment property:

\begin{definition} \label{def:segmentpr}
 A bounded domain $\Omega$ in $\mathbb{R}^n$ has the \emph{segment property} if there, for every $z \in \partial \Omega$, exists a neighbourhood $U$ of $z$ and a vector $w \in \C^n$ such that $$U\cap \bar \Omega +tw \subset \Omega, \quad \forall \ 0<t<1.$$
 \end{definition}

The Hartogs triangle is a standard counterexample in complex analysis in several variables. It is easy to that it does not have the segment property.
\begin{example} \label{ex:hartogs}
Hartogs triangle $T=\{(z_1,z_2)\in \C^2 : |z_1|<|z_2|<1\}$ does not have the segment property. If it did, there would exist a neighbourhood $U$ of $0$ and a vector $w \in \C^2$ such that $z+tw \in T$ for every $z \in \bar T \cap U$ and $0<t<1$. This would mean that $tw \in T$ for given $0<t<1$. Since $T$ is a Reinhardt domain we know that $-tw \in T$. By choosing $t$ small enough, we could assume that $-tw \in T \cap U$. The segment property gives us that $-tw+tw \in T$, but we really have that $-tw+tw=0 \in \partial T$. Hence $T$ can not have the segment property.
\end{example}
In the theory of elliptic/parabolic partial differential equations in $\R^n$,
the embedding theorems are of vital importance, and hence the domains studied usually cannot be given in terms Definition \ref{defckgammaclass}. The following definition is commonly used instead:

\begin{definition} \label{defckgamma}
Let $k \in \{0,1,2,...\}$ and $\gamma \in [0,1]$. A bounded open set $\Omega \subset \R^n$ has \emph{$C^{k,\gamma}$-boundary} if there exists a finite set of balls $\{B(x_j,r_j)\}$, with $x_j\in
\partial\Omega$ and $r_j>0$, such that $\{B(x_j,r_j)\}$ constitutes a covering of an open neighbourhood of $\partial\Omega$ and such that, for each $j$, 
\begin{equation}
	\begin{split}
		\Omega\cap B(x_j, 4 r_j)=\{y=(y',y_n)\in\R^n : y_n > \phi_j ( y')\}\cap B(x_j, 4 r_j), \\
		\partial\Omega\cap B(x_j, 4 r_j)=\{y=(y',y_n)\in\R^n : y_n= \phi_j ( y')\}\cap B(x_j, 4 r_j), 
	\end{split}
	\label{eqn:lipschitz:local:description} 
\end{equation}
in an appropriate coordinate system, and the functions $\phi_j: \R^{n-1} \to \R$ lies in the class $C^{k,\gamma}$. The $C^{k,\gamma}$ norm of $\Omega$ is defined to be $M=\max_j\|\phi_j\|_{C^{k,\gamma}}$. If $k = 0$, we use the notation $C^{k,\gamma} = C^\gamma$, where $\gamma = 0$ means that we just have a continuous graph function.
\end{definition}

Definition \ref{defckgamma} excludes the 'Rooms and Passages' example, secondly Sobolev embedding holds for any value of $k,\gamma$, thirdly these domains satisfy the segment property:

\begin{lemma} \label{lem:segment}
A domain $\Omega \subset \mathbb{R}^n$ has the segment property if, and only if, $\Omega$ has $C^0$-boundary in the sense of Definition \ref{defckgamma}.
\end{lemma}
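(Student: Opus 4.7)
The plan is to prove the two directions separately. For $(\Leftarrow)$, given a graph representation, the segment property is immediate: fix $z_0\in\partial\Omega$, select a ball $B(x_j,4r_j)$ from Definition \ref{defckgamma} containing $z_0$, and in the associated local coordinates take $w=s\,e_n$ for some small $s>0$. Shrinking to a smaller neighborhood $U$ of $z_0$ with $U+tw\subset B(x_j,4r_j)$ for all $t\in(0,1)$, any $z=(z',z_n)\in \bar\Omega \cap U$ satisfies $z_n\geq\phi_j(z')$, so $z+tw=(z',z_n+ts)$ has $n$-th coordinate strictly above the graph, placing it in $\Omega$.

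The harder direction reconstructs local graphs from the segment data. Given $z_0\in\partial\Omega$, let $U$ and $w$ be as in Definition \ref{def:segmentpr}, and by a rigid rotation reduce to $w=|w|e_n$. I would pass to a smaller neighborhood $U'$ of $z_0$ whose $e_n$-extent is strictly less than $|w|$. The key uniqueness step is: if $(p',a),(p',b)\in\partial\Omega\cap U'$ with $a<b$, then $(p',b)=(p',a)+\tfrac{b-a}{|w|}w$ with $\tfrac{b-a}{|w|}\in (0,1)$, so the segment property forces $(p',b)\in\Omega$, a contradiction. Hence each vertical line through $U'$ meets $\partial\Omega$ in at most one point.

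For existence, the segment property at $z_0$ yields $(z_0',z_{0,n}+t|w|)\in\Omega$ for $0<t<1$; dually, $(z_0',z_{0,n}-s|w|)\notin\bar\Omega$ for $0<s<1$, because otherwise applying the segment property at that point with parameter $t=s$ returns the contradictory conclusion $z_0\in\Omega$. Openness of $\Omega$ and of $(\bar\Omega)^{c}$ propagates both one-sided statements to $p'$ in a small neighborhood of $z_0'$, and connectedness of the vertical segment through $p'$ then supplies a point of $\partial\Omega$ above $p'$; combined with uniqueness, this defines a single-valued $\phi_j(p')$ whose graph coincides with $\partial\Omega\cap U'$. Continuity of $\phi_j$ follows from closedness of $\partial\Omega$ together with uniqueness, since any subsequential limit of $(p_k',\phi_j(p_k'))$ with $p_k'\to p'$ is forced to equal $(p',\phi_j(p'))$. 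The same forward/backward segment-property arguments then give $\Omega\cap U'=\{y_n>\phi_j(y')\}$, and a finite cover of $\partial\Omega$ by such local charts delivers Definition \ref{defckgamma} (after harmless shrinking to arrange the $4r_j$ convention).

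The main obstacle is the existence step: one has to leverage both the segment property and its contrapositive to pin down behaviour on each side of $z_0$ along the direction $w$, and then propagate both one-sided statements to nearby base points $p'$ while staying inside the region where the uniqueness argument was valid. Once both sides are controlled uniformly over a base ball in $\R^{n-1}$, the continuity of $\phi_j$ and the graph description of $\Omega$ follow by formal unpacking.
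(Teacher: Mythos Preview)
Your argument is correct and follows the standard route. The paper does not actually prove this lemma: it cites \cite[Theorem 7.3]{DZ} and \cite[Theorem 3.3]{fraenkel} and offers only a two-sentence sketch, describing the harder direction as ``rotate and translate the domain, then define the continuous function that locally defines the boundary as the least distance from some plane to the boundary''. Your construction of $\phi_j(p')$ as the unique height of the boundary point over $p'$ is exactly this, unpacked; the uniqueness step via the segment property and the existence step via connectedness of the vertical fiber are the expected ingredients.

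One small correction of phrasing: the assertion $(z_0',z_{0,n}-s|w|)\notin\bar\Omega$ is valid only for those $s\in(0,1)$ for which the translated point still lies in $U$, since otherwise the segment property (with the given pair $U,w$) is not applicable. You evidently know this from your last paragraph, but it should be said precisely where the claim first appears; for the existence argument you only need a single small $s_0>0$ anyway.
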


\begin{proof}
	For a proof see see \cite[Theorem 7.3]{DZ} or \cite[Theorem 3.3]{fraenkel}. The idea of the proof is to, first of all, point out that if the boundary is locally the graph of a continuous function, then there has to be a direction into the domain, hence the domain has the segment property. The proof of the converse is a bit more work. The main idea is o rotate and translate the domain then define the continuous function that locally defines the boundary, as the least distance from some plane to the boundary. Since the domain has the segment property, this functions will be continuous.
\end{proof}

 It should be noted that Definitions \ref{defck}, \ref{defckgammaclass} and  \ref{defckgamma} coincides, if $k \geq 1$, and $\gamma = 0$ using the identification $\R^{2n} \simeq \Cn$.

We will in the rest of the paper disregard Definitions \ref{defck} and \ref{defckgammaclass}, and refer only to Definition \ref{defckgamma}.

Another useful property of Definition \ref{defckgamma}, is that we can make precise estimates on the distance function. Making precise estimates on $\delta(z,\partial \Omega)$, is for example useful when working in a pseudoconvex domain $\Omega \subset \C^n$, we can then estimate the 'blow up' rate of the plurisubharmonic function $-\log \delta(z,\partial \Omega)$ as we approach the boundary. Such 'blow up' estimates will prove very useful later in this paper when we consider the question of pseudoconvexity + some boundary regularity $\implies$ hyperconvexity.

We will now provide some examples, on how to obtain these claimed estimates on the distance function, in some particular cases.
\begin{example}
	Let $\Omega \subset \R^n$ be a domain with $C^{0,1}$ boundary, usually denoted as, \emph{Lipschitz domains}. Without loss of generality, we may assume that our coordinate system is the standard one, $\partial \Omega \cap B(0,4)$ is given by $\phi:\R^{n-1} \to \R$, where $\phi$ is a Lipschitz function, with norm $C$.
	Then 
	\begin{equation} \label{eqlipest}
		\phi(x') \leq C|x'|, \quad x' \in B'(0,4).
	\end{equation}
	We would now like to calculate $\delta(\epsilon e_n, \partial \Omega)$, where $\epsilon e_n \in \Omega \cap B(0,1)$, $0 < \epsilon < 1$. Since our graph satisfies estimate \eqref{eqlipest},
	\begin{equation*}
		\delta(\epsilon e_n, \partial \Omega \cap B(0,1)) \geq \delta(\epsilon e_n, C|x|).
	\end{equation*}
	Notice that we have a strict relation between the distance from $\epsilon e_n$ to the cone $C|x|$, and the horizontal (along the hyperplane $\{x_n = 0\}$) distance to $C|x|$. Hence if $t$ solves
	\begin{equation*}
		C\, t = \epsilon,
	\end{equation*}
	then $t \leq C \delta(\epsilon e_n, C|x|)$.
	We have
	\begin{equation*}
		\epsilon/C \leq \delta(\epsilon e_n, \partial \Omega \cap B(0,1)).
	\end{equation*}
	We also have the obvious upper bound
	\begin{equation*}
		\delta(\epsilon e_n, \partial \Omega \cap B(0,1)) \leq \epsilon.
	\end{equation*}
	Returning to the general setting we get
	\begin{equation*}
		\delta(x)+\epsilon/C \leq \delta(x+\epsilon e_n, \partial \Omega) \leq \delta(x) + \epsilon,
	\end{equation*}
	for $x \in B(0,1) \cap \Omega$.
\end{example}
\begin{example}
	Let $\Omega \subset \R^n$ be a domain with $C^{0,\gamma}$ boundary, with $\gamma \in (0,1)$, usually denoted as, \emph{H\"older domains}. Proceeding as in the above example, we obtain an estimate of the form
	\begin{equation*}
		\delta(z)+(\epsilon/C)^{1/\gamma} \leq \delta(z+\epsilon e_n, \partial \Omega) \leq \delta(z) + \epsilon.
	\end{equation*}
\end{example}

We will now provide a class of domains which lies in between the Lipschitz domains and H\"older domains, in terms of regularity.
First let us define the space of functions our graph functions will lie in:
\begin{definition} \label{defloglip1}
	Let $f: \Omega \to \R$, we say that $f$ is \emph{Log-Lipschitz continuous} if for each ball $B(x,r) \subset \Omega$, for $r < 1/10$, we have for each $x,y \in B(x,r)$
	\begin{equation*}
		|f(x) - f(y)| \leq C |x-y|\log (1/|x-y|),
	\end{equation*}
	with $C(B(x,r)) > 0$. The number $||f||_{LL}:=C$ is called the \emph{Log-Lipschitz norm} of the function $f$.
\end{definition}

Let us give an example of a Log-Lipschitz continuous function which is not just the obvious function $|x|\log(1/|x|)$.

\begin{example} \label{excusp}
	Let $f(x) = x e^x$, for $x \in \C$, then the multivalued function $W(x) = f^{-1}(x)$, is called the Lambert $W$ function. For real valued $x$ we obtain two branches for $-1/e < x < 0$. Let $W_{-1}:(-1/e,0) \to (-\infty,-1]$ be the lower branch, and denote $W_0:(-1/e,\infty) \to [-1,\infty)$, the upper branch. Then the function 
	\begin{equation*}
		f(x) = 1 + |x| W_0(1/|x|),
	\end{equation*}
	is the cusp shown in Figure \ref{fig1}. Which satisfies Definition \ref{defloglip1}, and hence is a Log-Lipschitz function. Moreover the distance between two vertical translates is given by $-\epsilon/\log(\epsilon)$.
\end{example}

\begin{figure}[!ht]
	\begin{center}
		\input 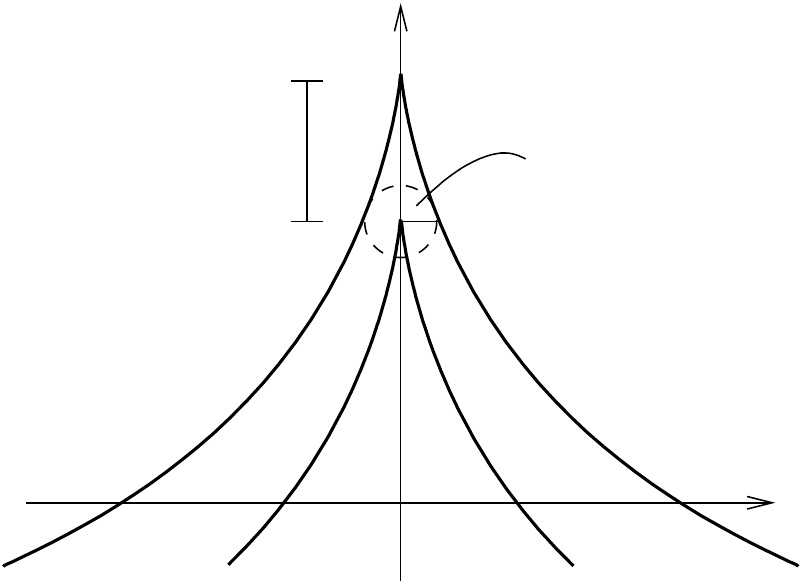_t
	\end{center}
	\caption{An example cusp, which is H\"older for each $0 < \alpha < 1$ but not Lipschitz}
	\label{fig1}
\end{figure}

We are now in a position to define Log-Lipschitz domains.

\begin{definition} \label{defloglip}
	A bounded open set $\Omega \subset \R^n$ has \emph{Log-Lipschitz-boundary} if there exists a finite set of balls $\{B(x_j,r_j)\}$, with $x_j\in
	\partial\Omega$ and $r_j>0$, such that $\{B(x_j,r_j)\}$ constitutes a covering of an open neighbourhood of $\partial\Omega$ and such that, for each $j$, 
	\begin{equation*}
		\begin{split}
			\Omega\cap B(x_j, 4 r_j)=\{y=(y',y_n)\in\R^n : y_n > \phi_j ( y')\}\cap B(x_j, 4 r_j), \\
			\partial\Omega\cap B(x_j, 4 r_j)=\{y=(y',y_n)\in\R^n : y_n= \phi_j ( y')\}\cap B(x_j, 4 r_j), 
		\end{split}
	\end{equation*}
	in an appropriate coordinate system, and the functions $\phi_j: \R^{n-1} \to \R$ is Log-Lipschitz continuous. The Log-Lipschitz norm of $\Omega$ is defined to be $M=\max_j\|\phi_j\|_{LL}$.
\end{definition}

Again, control on the regularity of the graph function, gives control on the distance function:

\begin{lemma} \label{lemloglip}
	Let $\Omega \subset \R^n$ be a domain with Log-Lipschitz boundary. Let $x \in \Omega \cap B(x_j,r_j)$, with $(x_j,r_j)$ given by Definition \ref{defloglip}, then in the local coordinates associated to $(x_j,r_j)$, we have
	\begin{equation} \label{eqdist}
		\delta(x) + \frac{-\epsilon}{\tilde C W_{-1}(-\epsilon / C)} \leq \delta(x+\epsilon e_n) \leq \delta(x) + \epsilon,
	\end{equation}
	for any $0 < \epsilon < \epsilon_0(r_j)$, and $C>0$ and $\tilde C>0$, depends on the Log-Lipschitz constant of the domain. $W$ is the Lambert-W function defined in Example \ref{excusp}.
\end{lemma}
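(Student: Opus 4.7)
The strategy parallels the Lipschitz and Hölder examples that precede the lemma. The upper bound $\delta(x+\epsilon e_n)\leq \delta(x)+\epsilon$ is just the triangle inequality applied to a boundary realiser of $\delta(x+\epsilon e_n)$, so all the work is in the lower bound.

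After passing to the local chart associated to $(x_j,r_j)$, in which $\partial\Omega$ is the graph $y_n=\phi_j(y')$ with $\|\phi_j\|_{LL}\leq M$, I would first establish the estimate in the base configuration where the nearest boundary point to $x$ is the origin and $\phi_j(0)=0$. Definition \ref{defloglip1} then gives $|\phi_j(y')|\leq C|y'|\log(1/|y'|)$ for $|y'|$ below an absolute threshold depending on $M$, so locally $\partial\Omega$ is trapped beneath the cusp $\{y_n=C|y'|\log(1/|y'|)\}$, and $\delta(\epsilon e_n,\partial\Omega)$ is bounded below by the distance from $\epsilon e_n$ to this cusp. Exactly as in the Lipschitz example, a planar argument reduces this to a geometric multiple of the horizontal parameter $t>0$ solving
\[
C\,t\log(1/t)=\epsilon.
\]
Setting $u=\log(1/t)$ rewrites the equation as $(-u)e^{-u}=-\epsilon/C$; since $t$ must be small, $-u$ lies on the lower branch, giving $-u=W_{-1}(-\epsilon/C)$. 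Applying the defining identity $W(z)e^{W(z)}=z$ yields
\[
t=e^{-u}=\frac{-\epsilon/C}{W_{-1}(-\epsilon/C)},
\]
which, after absorbing the geometric constant from the cusp-distance comparison into a new $\tilde C$, produces the lower bound in the base case.

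To transfer the base case to a general $x\in\Omega\cap B(x_j,r_j)$, I would recentre coordinates at the point $p\in\partial\Omega$ realising $\delta(x)$, observe that $\phi_j$ is still Log-Lipschitz in the shifted chart with the same norm, and apply the previous step both to $(x-p)+\delta(x)e_n$ (which gives $0$) and to $(x-p)+(\delta(x)+\epsilon)e_n$; subtracting yields the claimed inequality for $\delta(x+\epsilon e_n)-\delta(x)$. The principal technical obstacle is to guarantee that every auxiliary point used in the recentring and every horizontal displacement produced by the cusp comparison stays inside $B(x_j,4r_j)$, where the graph description is valid. Since these horizontal displacements are of order $\epsilon/\log(1/\epsilon)$, choosing $\epsilon_0(r_j)$ sufficiently small makes them much smaller than $r_j$, which closes the argument.
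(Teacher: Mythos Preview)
Your argument mirrors the paper's: after normalising to $x_j=0$, $\phi_j(0)=0$, you bound the graph by the cusp $C|y'|\log(1/|y'|)$, reduce to the horizontal parameter $t$ solving $Ct\log(1/t)=\epsilon$, and identify $t=\dfrac{-\epsilon}{C\,W_{-1}(-\epsilon/C)}$ exactly as the paper does. The paper is in fact terser than you---it treats only this base case and then asserts that \eqref{eqdist} ``obviously'' follows, without spelling out any recentring or the choice of $\epsilon_0(r_j)$; your recentring sketch is slightly garbled (the vector $x-p$ from the nearest boundary point need not point along $-e_n$, so $(x-p)+\delta(x)e_n$ is not $0$ in general), but this is a step the paper itself leaves unaddressed.
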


\begin{proof}
	Without loss of generality we may assume that $x_j = 0$ and $r_j = 1/10$, and $\phi_j$ is Log-Lipschitz continuous, with norm $C$, and $\phi_j(0) = 0$. Then
	\begin{equation*}
		\phi_j(x') \leq C |x'| \log \frac{1}{|x'|} \quad \forall x' \in B'(0,1/10).
	\end{equation*}
	We would like to calculate $\delta(\epsilon e_n, \partial \Omega)$, where $\epsilon e_n \in \Omega \cap B(0,1/10)$ if $0 < \epsilon < 1/10$.
	\begin{equation*}
		\delta(\epsilon e_n, \partial \Omega) \geq \delta(\epsilon e_n, C|x'|\log \frac{1}{|x'|}), \quad x' \in B'(0,1/10).
	\end{equation*}
	Where we obtain that if $t > 0$ solves the following equation
	\begin{equation*}
		C t \log \frac{1}{t} = \epsilon,
	\end{equation*}
	then $t \leq \hat C \delta(\epsilon e_n,C|x'|\log \frac{1}{|x'|})$ with $x' \in B'(0,1/10)$.
	The solution $t$ is given as
	\begin{equation*}
		t=\frac{-\epsilon}{C W_{-1}(-\epsilon / C)}.
	\end{equation*}
	Hence we obtain the estimate
	\begin{equation*}
		\frac{-\epsilon}{\tilde C W_{-1}(-\epsilon / C)} \leq \delta(\epsilon e_n, \partial \Omega),
	\end{equation*}
	which obviously implies estimate \eqref{eqdist}.
\end{proof}
\section{Proof of Theorem \ref{theorem:c0bdry}}\label{sec:pshmerg}
	For ease of notation, assume that $0 \in \Omega$.	Since $\Omega$ has a $C^0$ boundary, according to Definition \ref{defckgamma} there exists finite collection of balls $W_i = B(x_j,4r_j)$, $j=1,\ldots, m$, such that $\{B(x_j,r_j)\}$ covers $\partial \Omega$. Together with a set of unit vectors $w_1,\ldots,w_m$ in $\C^n$ representing the $e_{2n}$ direction in the local coordinates associated to the graph function.
	Let $\epsilon_w = \min_{j} r_j$. Construct $W_0 = \{z \in \Omega: \delta(z,\partial \Omega) > \epsilon_w \}$, then $\{W_j\}_{j=0}^m$ forms an open cover of $\overline \Omega$. Moreover $\big \{ \{ z \in W_j: \delta(z,\partial W_j) > \epsilon_w \} \big \}_{j=0}^m$ is still an open cover of $\overline \Omega$. Set
	\begin{equation*}
		B_j = \{ z \in W_j: \delta(z,\partial W_j) > \epsilon_w/2 \}, \quad \text{and} \quad B^-_j = \{ z \in W_j: \delta(z,\partial W_j) > \epsilon_w \}.
	\end{equation*}
	Let now 
	\begin{equation*}
		d_j = \delta\bigg (\partial B_j \cap \overline \Omega,\partial \bigg (\bigcup_{k \neq j} B^-_k \bigg ) \bigg ) \geq \epsilon_w/2 > 0,
	\end{equation*}
	where the inequality comes from the fact that $\cup B^-_k$ is an open cover of $\overline \Omega$.
	We can now construct a family of compact sets, let
	\begin{equation*}
		K_{j,k} = \{z \in \overline \Omega: \delta(z,\partial B_j) \leq d_j \} \cap \overline{B^-_k}.
	\end{equation*}
	We can now easily see two things, first is that $K_{j,k}$ is compact and the second that $K_k = \bigcup_{j} K_{j,k} \subset B_k$. We can now see that $\delta(K_k,\partial B_k) \geq {\epsilon_w}/{2}$, hence we can form the cutoff-functions $\xi_k$ as follows, let $-1 \leq \xi_k \leq 0$, $\xi_k(z) = 0$ if $\delta(z,K_k) \leq {\epsilon_w}/{4}$ and $\xi_k \equiv -1$ outside of $B_k$. Notice that $||{\mathcal{L} \xi_k}|| \leq C /(\epsilon_w)^2$ hence $\xi_k + c|z|^2$ where $c=c(\epsilon_w) \approx 1/(\epsilon_w)^2$, is plurisubharmonic in $\C^n$.
	
	Let $\varphi \in \PSH(\Omega)\cap C(\bar \Omega)$. By letting
	\begin{equation*}
		\varphi_j(z)=\varphi(z+ \nu w_j),
	\end{equation*}
	with $0 < \nu < \epsilon_w$ and  $w_0 := 0$, we see that for all $j$, it holds that
	\begin{equation} \label{eq:epsilon_j:1}
	|\varphi(z) - \varphi_j(z)|< \omega(\nu) \ \text{on} \ \bar \Omega \cap \bar B_j.
	\end{equation}
	This is possible since $\varphi$ is uniformly continuous on $\bar \Omega$, with modulus of continuity given by the function $\omega:\R_+ \to \R_+$. Also note that $\varphi_j$ is plurisubharmonic and continuous in $U_j \equiv \Omega \cap W_j - \nu w_j \supset \overline \Omega \cap \overline B_j$, if $\nu < \epsilon_w/2$. We will now use $\varphi_j$ to construct a family of functions, whose maximum will approximate $\varphi$. Let for $z \in U_j  \cap \overline B_j$
	\begin{equation*}
		f_j(z) = \varphi_j(z) + 3 \omega(\nu) (\xi_j(z) + c|z|^2),
	\end{equation*}
	elsewhere define the function $f_j = -\infty$. The maximum of the family
	\begin{equation*}
		v(z) = \max f_j(z),
	\end{equation*}
	will be our candidate approximating function.
	To show that $v$ approximates $\varphi$ uniformly on $\overline \Omega$, we observe that for $z \in \overline \Omega$ we have $v(z) = f_{j_z}(z)$, for some $j_z \in \{1,\ldots,m\}$, moreover according to \eqref{eq:epsilon_j:1}
	\begin{eqnarray*}
		|\varphi(z) - v(z)| &=& \bigg |\varphi(z) - \bigg (\varphi_{j_z}(z) + 3 \omega(\nu) (\xi_{j_z}(z) + c|z|^2) \bigg ) \bigg| \\
		&\leq& \omega(\nu)\big [1+C\, \diam (\Omega) \big ].
	\end{eqnarray*}
	Note that the above estimate gives us uniform approximation, since we can choose $\nu$ as small as we want to obtain an approximation arbitrarily close.
	
	Next we will show that $v$ is in fact plurisubharmonic and continuous on a neighbourhood of $\overline \Omega$, namely $U = \bigcup_j (U_j \cap \overline B_j)$. First take a $z \in U$ that is not on the boundary of any $\partial B_j$, then the maximum is taken of a finite amount of continuous and plurisubharmonic functions in a neighbourhood of $z$. If $z \in \partial B_j \cap U$, then there exists a $k$ such that $\delta(z,K_k) \leq \epsilon_w/2 $, for this $j$ and $k$, we have that
	\begin{eqnarray*}
		f_j(z) &=& \varphi_j(z) + 3 \omega(\nu) (\xi_j(z) + c|z|^2) \\
		&=& \varphi_j(z) + 3 \omega(\nu) (c|z|^2 - 1) \\
		&=& \bigg ( \varphi_j(z) - \varphi_k(z) \bigg ) + \bigg (\varphi_k(z) + 3 \omega(\nu)(\xi_k(z) + c |z|^2) \bigg ) - 3 \omega(\nu) \\
		&\leq& - \omega(\nu)+f_k(z) < f_k(z),
	\end{eqnarray*}
	since $\xi_k(z) = 0$. This estimate means that locally near $z$, we can assume that the function $v$ is the maximum of functions $f_k$, $k \neq j$, that are continuous and plurisubharmonic in a neighbourhood of $z$.
	We have now proved that in $U$ the function $v$ is plurisubharmonic and continuous. To conclude, we mollify $v$ with a small enough kernel, guarantied since $\Omega \Subset U$.
	\begin{flushright}
		\qedsymbol
	\end{flushright}

\section{Proofs of Theorem \ref{thmhypconvex} and Corollary \ref{corhypconvex}} 

\subsection{Proof of Theorem \ref{thmhypconvex}}

Fix $\epsilon > 0$, again since $\Omega$ is $C^0$, there exists a finite number of points $x_j \in \partial \Omega$, radii $r_j > 0$, and unit vectors $w_j \in \C^n$, such that for all $j$:
\begin{equation} \label{eqlip}
	(\Omega \cap B(x_j,r_j))+ t w_j \subset \Omega \quad \forall 0 < t < \epsilon_w = \min_j r_j.
\end{equation}
For $z \in \overline B(x_j,r_j/2)$ and $0 < \epsilon < \epsilon_w/c$, then
\begin{equation} \label{eqlipbd}
	\delta(z)+f(\epsilon) \leq \delta(z+\epsilon w_j) \leq \delta(z) + \epsilon,
\end{equation}

Let
\begin{equation}
	\label{eqvepsilonj}
	v_{\epsilon,j} = \log \frac{1}{\delta(z+\epsilon w_j)}, \quad
	z \in \Omega \cap \overline B(x_j,r_j/2),
\end{equation}
then $v_{\epsilon,j}$ is plurisubharmonic since $\log \frac{1}{\delta(z)}$ is, and \eqref{eqlipbd} gives
\begin{equation*}
	\log \frac{1}{\delta(z)+\epsilon} \leq v_{\epsilon,j}(z) \leq \log \frac{1}{\delta(z) + f(\epsilon)}.
\end{equation*}
It remains to combine the functions $v_{\epsilon,j}$ in such a way that we obtain a global plurisubharmonic function $v_\epsilon$ on $\Omega$.

Let $\psi_j \in C_0^\infty(B(x_j,r_j/2))$ , such that $\psi_j = \log(\epsilon/f(\epsilon))$ on $B(x_j,r_j/3)$. 
Note that $\ddc \psi_j \leq C \log(\epsilon/f(\epsilon))\ddc \abs{z}^2$.
Let $\lambda = C \log (\epsilon/f(\epsilon))$, where $C > 1$ is a constant, then $\psi_j(z) + \lambda(\epsilon) |z|^2$ is plurisubharmonic.
Moreover we obtain the upper and lower bounds for each $j$.
\begin{equation*}
	v_{\epsilon,j}(z) + \psi_j(z) + \lambda |z|^2 > \lambda |z|^2 + \log \frac{1}{\delta(z)+f(\epsilon)},
\end{equation*}
on $B(x_j,r_j/3)$, and
\begin{equation*}
	v_{\epsilon,j}(z) + \psi_j(z) + \lambda |z|^2 \leq \lambda |z|^2 + \log \frac{1}{\delta(z)+f(\epsilon)},
\end{equation*}
on $\partial B(x_j,r_j/2)$.
Now let for each $z \in \Omega$ define
\begin{equation}
	\label{eqvepsilon}
	v_{\epsilon} (z) = \max_{j} \bigg (v_{\epsilon,j}(z) + \psi_j(z) + \lambda |z^2|-\gamma \lambda, |z|^2 - \lambda \bigg),
\end{equation}
the maximum is taken over all indices $j$ such that $z \in \overline B(x_j,r_j/2)$, if no such index exists, we let $v_{\epsilon,j} = |z|^2 - \lambda$. For $\gamma > 1$ sufficiently large, the function $v_\epsilon(z)$ is plurisubharmonic and continuous on $\Omega$.
Also note that as $\epsilon \to 0$ the above function will be $+\infty$ on $\partial \Omega$. 

Now we construct the function $\phi$ with help of the functions $v_\epsilon$. Observe that \eqref{eqfassump}, gives that $\lim_{\epsilon \to 0^+} \frac{\lambda}{\log \frac{1}{\epsilon}} = 0$.
That is,
\begin{equation}
	\label{eqbdd}
	\log \frac{1}{\delta(z)+\epsilon} - \hat C_1 \lambda\leq v_\epsilon (z) \leq \log \frac{1}{\delta(z)+f(\epsilon)} - \hat C_2 \lambda,
\end{equation}
for constants $\hat C_1, \hat C_2 > 1$.
To continue let us set
\begin{equation*}
	w_\epsilon (z) = \frac{v_\epsilon (z)}{\log(1/\epsilon)} - 1,
\end{equation*}
and
\begin{equation}\label{eq:w_sup}
	w(z) = \sup_{0 < \epsilon \leq \epsilon_0} w_\epsilon(z),
\end{equation}
where $\epsilon_0 = e_w/c$. 

Next we will build estimates for the family of function $w_\epsilon$ in order to obtain estimates for $w$.
Using \eqref{eqbdd} we obtain the following
\begin{equation}
	\frac{\log \frac{1}{\delta(z)+\epsilon} - \hat C_1 \lambda(\epsilon) - \log \frac{1}{\epsilon}}{\log(1/\epsilon)}
	\leq 
	w_\epsilon (z) 
	\leq 
	\frac{\log \frac{1}{\delta(z)+f(\epsilon)} - \hat C_2 \lambda(\epsilon) - \log \frac{1}{\epsilon}}{\log(1/\epsilon)}.
\end{equation}

First let us verify that $w_\epsilon < 0$ for all $0 < \epsilon < \epsilon_0$.
Note that this is equivalent to verifying
\begin{equation*}
	\log \frac{f(\epsilon)}{\delta(z) + f(\epsilon)} < 0,
\end{equation*}
which is obviously true.

The choice $\epsilon = \delta(z)$ gives the lower bound
\begin{equation} \label{eqlower}
	w(z) \geq w_\epsilon (z) \geq -\frac{ \log(2)}{\log(1/\delta(z))} - \hat C_1 \omega(\delta(z)).
\end{equation}

To conclude that $w$ is continuous, note that $w_\epsilon(z)$ is a continuous function in terms of $(\epsilon,z)$, since $f \in C^0$, fixing a small neighbourhood around $z$, gives us that in this neighbourhood the supremum is taken for $\epsilon \geq \hat \epsilon$, then we can via a compactness argument deduce that $w$ is continuous at $z$. Using $\eqref{eqlower}$ and the fact that $w < 0$, we see that $w (z) \to 0$ as $z \to \partial \Omega$, which implies that $w \in C^0(\bar \Omega)$. Since we now know that $w$ is continuous it is obviously plurisubharmonic since it is locally the supremum over a family of plurisubharmonic functions.

If we in addition assume that $f$ is an increasing function, notice that
\begin{equation*}
	\log \frac{f(\epsilon)}{\delta(z) + f(\epsilon)}
\end{equation*}
is an increasing function with respect to $\epsilon$, and as such we can obtain the improved upper bound

\begin{equation} \label{equpper}
	w_\epsilon (z) \leq \frac{\log \frac{1}{\delta(z)+f(\epsilon)} - \hat C_2 \lambda(\epsilon) - \log \frac{1}{\epsilon}}{\log(1/\epsilon)} \leq \frac{\log \frac{f(\epsilon_0)}{\delta(z) + f(\epsilon_0)} }{\log 1/\epsilon_0} < 0.
\end{equation}
The upper estimate \eqref{equpper} is actually a linear upper bound. Leaving a big gap between the lower bound \eqref{eqlower} and the upper bound \eqref{equpper}, however this is natural due to the gap between the upper and lower estimate on the distance function, together with the compensating factor $\lambda$ which depends on $\epsilon$.
\begin{flushright}
	\qedsymbol
\end{flushright}

\subsection{Proof of Corollary \ref{corhypconvex}}
First we need the following lemma:
	\begin{lemma}
		If we let $f(\epsilon) = \frac{-\epsilon}{\tilde C W_{-1}(-\epsilon / C)}$, then the following limit holds.
		\begin{equation} \label{eqlim}
			\lim_{\epsilon \to 0^+} \frac{\log \frac{\epsilon}{f(\epsilon)}}{\log \frac{1}{\epsilon}} = 0.
		\end{equation}
		Furthermore the function $f$ is increasing for $0 < \epsilon < C/e$.
	\end{lemma}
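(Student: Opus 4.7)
The plan is to reduce both claims to the defining identity of the Lambert $W$ function. Writing $\tau(\epsilon) := W_{-1}(-\epsilon/C)$, the relation $\tau e^{\tau} = -\epsilon/C$ gives $e^{\tau(\epsilon)} = -\epsilon/(C\tau(\epsilon))$. Consequently $f$ admits the clean rewriting $f(\epsilon) = -\epsilon/(\tilde C \tau(\epsilon))$, and since $\tau(\epsilon) < 0$ we get $\epsilon/f(\epsilon) = \tilde C\,|\tau(\epsilon)|$. For $\epsilon \in (0, C/e)$ the argument $-\epsilon/C$ lies in $(-1/e, 0)$, so $\tau(\epsilon) < -1$ and in particular $1 + \tau(\epsilon) < 0$; this single monotonicity statement about $\tau$ will drive everything.

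For the limit \eqref{eqlim}, I would start from the fixed-point equation $|\tau|\,e^{-|\tau|} = \epsilon/C$, which after taking logarithms reads $|\tau(\epsilon)| = \log(C/\epsilon) + \log|\tau(\epsilon)|$. Iterating this identity once shows that $|\tau(\epsilon)| \sim \log(1/\epsilon)$ as $\epsilon \to 0^+$, and hence $\log|\tau(\epsilon)| \sim \log\log(1/\epsilon)$. Plugging in gives
\[
    \frac{\log\bigl(\epsilon/f(\epsilon)\bigr)}{\log(1/\epsilon)} = \frac{\log \tilde C + \log|\tau(\epsilon)|}{\log(1/\epsilon)} \sim \frac{\log\log(1/\epsilon)}{\log(1/\epsilon)} \longrightarrow 0,
\]
which is the desired limit.

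For monotonicity, I would differentiate $f(\epsilon) = -\epsilon/(\tilde C \tau(\epsilon))$ directly. The standard formula $W_{-1}'(x) = W_{-1}(x)/[x(1+W_{-1}(x))]$ combined with the chain rule gives $\tau'(\epsilon) = \tau(\epsilon)/[\epsilon(1+\tau(\epsilon))]$, and a short algebraic simplification collapses the quotient to
\[
    f'(\epsilon) = \frac{-1}{\tilde C\,(1+\tau(\epsilon))}.
\]
Since $1 + \tau(\epsilon) < 0$ on $(0, C/e)$, this is strictly positive, proving that $f$ is strictly increasing there. I do not anticipate a genuine obstacle: the only subtle point is making the asymptotic $|\tau(\epsilon)| \sim \log(1/\epsilon)$ precise from the fixed-point equation rather than invoking a tabulated expansion of $W_{-1}$, and one iteration of that equation suffices.
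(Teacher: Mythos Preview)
Your proposal is correct and follows essentially the same route as the paper: both compute $\epsilon/f(\epsilon)=-\tilde C\,W_{-1}(-\epsilon/C)$, use the asymptotic $|W_{-1}(-\epsilon/C)|\sim\log(1/\epsilon)$ to reduce the limit to $\log\log(1/\epsilon)\big/\log(1/\epsilon)\to 0$, and obtain the identical derivative $f'(\epsilon)=-1/\bigl(\tilde C(1+W_{-1}(-\epsilon/C))\bigr)>0$ for monotonicity. The only difference is cosmetic: the paper simply cites the asymptotic $W_{-1}(-x)\approx\log x$, whereas you extract it from the fixed-point identity $|\tau|=\log(C/\epsilon)+\log|\tau|$, which makes your argument marginally more self-contained.
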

	\begin{proof}
		Observe that $W_{-1}(-x) \approx \log(x)$ as $x \to 0^+$, hence
		\begin{eqnarray*}
			\lim_{\epsilon \to 0^+} \frac{\log \frac{\epsilon}{f(\epsilon)}}{\log \frac{1}{\epsilon}} &=& \lim_{\epsilon \to 0^+} \frac{\log\left ( \tilde C W_{-1}(-\epsilon / C)  \right )}{\log \frac{1}{\epsilon}} \\
			&=& \lim_{\epsilon \to 0^+} \frac{\log\left ( -\tilde C \log (\epsilon / C)  \right )}{\log \frac{1}{\epsilon}} = 0.
		\end{eqnarray*}
		To prove that $f$ is increasing, note that
		\begin{equation*}
			\frac{\partial f}{\partial \epsilon} = -\frac{1}{\tilde C (1+W_{-1}(-\epsilon/C))} > 0
		\end{equation*}
		since $W_{-1}(-\epsilon/C) < -1$ if $\epsilon/C < 1/e$.
	\end{proof}
	Notice that due to the lemma above together with Lemma \ref{lemloglip}, we satisfy all the requirements of Theorem \ref{thmhypconvex}, and as such we obtain a bounded plurisubharmonic exhaustion function $w$ together with the bounds \eqref{eqexhaustbddlow} and \eqref{eqexhaustbddhigh}.
	What is left to do is to show that $\ddc w(z)$ is bounded from below, then we use this together with a regularisation by Richberg to obtain a smooth bounded plurisubharmonic exhaustion function satisfying the theorem.
	
	To bound $\ddc w(z)$, let us go back to the proof of Theorem \ref{thmhypconvex}, giving this estimate
	\begin{equation*}
		\frac{\log \frac{1}{\delta(z)+\epsilon} - \hat C_1 \lambda(\epsilon) - \log \frac{1}{\epsilon}}{\log(1/\epsilon)}
		\leq 
		w_\epsilon (z) 
		\leq 
		\frac{\log \frac{1}{\delta(z)+f(\epsilon)} - \hat C_2 \lambda(\epsilon) - \log \frac{1}{\epsilon}}{\log(1/\epsilon)}.
	\end{equation*}
	Then if we can show that for $\epsilon < \hat c \delta(z)$, for some small $\hat c > 0$, then we know that the supremum in \eqref{eq:w_sup} is attained for $\epsilon \geq \hat c \delta(z)$. Using the following bound on $\ddc w_\epsilon$
	\begin{equation*}
		\ddc w_\epsilon(z) \geq C \frac{\log \big [\tilde C_1 \log \frac{1}{\epsilon} \big]}{\log \frac{1}{\epsilon} } \ddc |z|^2,
	\end{equation*}
	together with the lower bound on $\epsilon$, we obtain
	\begin{equation} \label{eqddcbdd}
		\ddc w(z) \geq C \frac{\log \big [\tilde C_1 \log \frac{1}{\delta(z)} \big]}{\log \frac{1}{\delta(z)} } \ddc |z|^2.
	\end{equation}
	Notice that in the above we have used the fact that if $\ddc w_\epsilon(z) \geq \gamma$, for all $\epsilon \in [\epsilon_{\text{min}} , \epsilon_{\text{max}}]$, then $\ddc \sup_{\epsilon} w_\epsilon(z) \geq \gamma$.
	
	To show that the supremum is not attained for small $\epsilon$ it is enough to show that the following holds for small $\epsilon$,
	\begin{equation}
		\label{eqeps0}
		\frac{\log \frac{1}{\delta(z)+\epsilon_0} - \hat C_1 \lambda(\epsilon_0) - \log \frac{1}{\epsilon_0}}{\log(1/\epsilon_0)} \geq
		\frac{\log \frac{1}{\delta(z)+f(\epsilon)} - \hat C_2 \lambda(\epsilon) - \log \frac{1}{\epsilon}}{\log(1/\epsilon)}.
	\end{equation}
	The right hand side can be rewritten as
	\begin{equation*}
		\frac{\log \frac{f(\epsilon)}{\delta(z)+f(\epsilon)} - \hat C_3 \lambda(\epsilon)}{\log(1/\epsilon)} = 
		\frac{\log \frac{f(\epsilon)}{\delta(z)+f(\epsilon)}}{\log(1/\epsilon)} - \hat C_3 \omega(\epsilon) =: A(\epsilon) - B(\epsilon).
	\end{equation*}
	Notice that $A \to -\infty$ and $B \to 0$ as $\epsilon \to 0^+$, this gives that \eqref{eqeps0} is satisfied for small enough $\epsilon$, however we now need to estimate the size of $\epsilon$ from above. Since our graph is Log-Lipschitz, we can assume that
	$$f(\epsilon) = \frac{\epsilon}{\tilde C_1 \log(1/\epsilon)},$$
	since $-W_{-1}(-\epsilon/C) \geq \frac{1}{\hat C} \log(1/\epsilon)$, for some constant $\hat C(\epsilon_0) > 1$, $\epsilon < \epsilon_0$.
	We then obtain
	\begin{eqnarray*}
		A(\epsilon) = \log \frac{\epsilon}{(\tilde C_1 \log(1/\epsilon)) \delta(z) + \epsilon} \leq \log \frac{\epsilon}{(\tilde C_1 \log(1/\epsilon)) \delta(z)}.
	\end{eqnarray*}
	Assume now that $\delta(z) \leq \epsilon_0$ then it is enough to show that for small $\epsilon$ we have
	\begin{equation*}
		\eta := \frac{\log \frac{1}{2\epsilon_0} - \hat C_1 \lambda(\epsilon_0) - \log \frac{1}{\epsilon_0}}{\log(1/\epsilon_0)} \geq \log \frac{\epsilon}{(\tilde C_1 \log(1/\epsilon)) \delta(z)} - B(\epsilon).
	\end{equation*}
	To verify this we first find for which $0 < \epsilon << 1$ the right hand side is an increasing function with respect to $\epsilon$, then we find an $\epsilon$ in that range which satisfies the above inequality, and we are done.
	We want to show \eqref{eqeps0}, for $\epsilon < \hat c \delta(z)$, with $\hat c$ a small positive constant, to be chosen.
	To show this we first show that for small enough $\epsilon$ the following is true
	\begin{equation} \label{eqepderiv}
		\frac{\partial}{\partial \epsilon} \left (  \log \frac{\epsilon}{(\tilde C_1 \log(1/\epsilon)) \delta(z)} - B(\epsilon) \right ) > 0,
	\end{equation}
	which is implied if the following is true
	\begin{equation*}
		\hat C_3 + \log \frac{1}{\epsilon} + \left(\log \frac{1}{\epsilon}\right)^2- \hat C_3 \log \big (\tilde C_1 \log \frac{1}{\epsilon} \big ) > 0,
	\end{equation*}
	which is obviously true for $0 < \epsilon < \epsilon_1(\hat C_3, \tilde C_1)$ small enough. Secondly we show that
	\begin{equation} \label{eqgammabound}
		\eta \geq \log \frac{\epsilon}{(\tilde C_1 \log(1/\epsilon)) \delta(z)}
	\end{equation}
	for some $\epsilon < \epsilon_1$. To show \eqref{eqgammabound}, we first note that since we made the assumption that $\epsilon < \hat c \delta(z)$, we have
	\begin{equation*}
		\log \frac{\hat c \delta(z)}{(\tilde C_1 \log 1/(\hat c \delta(z)))\delta(z)} \geq \log \frac{\epsilon}{(\tilde C_1 \log(1/\epsilon)) \delta(z)},
	\end{equation*}
	we also note that there exists $\hat c(\epsilon_0) > 0$ such that
	\begin{equation*}
		\log \frac{\hat c }{\tilde C_1 \log \frac{1}{\hat c \epsilon_0}} \leq \eta.
	\end{equation*}
	We are now done since $\delta(z) < \epsilon_0$, and the above implies \eqref{eqgammabound}, which together with \eqref{eqepderiv} implies \eqref{eqeps0}, finally giving us \eqref{eqddcbdd}.

	We will now use the following regularisation, \cite[Thm 5.21]{Dembook}, originally due to Richberg (\cite{R}), reformulated for our use.
	\begin{lemma} \label{lemRich}
		Let $w \in \PSH(\Omega) \cap C^0(\Omega)$ which is strictly plurisubharmonic, with $\ddc w \geq \gamma$, for some positive (1,1)-form $\gamma$ on $\Omega$. For any continuous function $\lambda \in C^0(\Omega)$, $\lambda > 0$, there exists a plurisubharmonic function $\tilde w \in \C^\infty(\Omega)$, such that $w \leq \tilde w \leq w + \lambda$ on $\Omega$, which satisfies $\ddc \tilde w \geq (1-\lambda) \gamma$.
	\end{lemma}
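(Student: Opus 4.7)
The plan is to prove Lemma \ref{lemRich} via the classical Richberg regularisation scheme (cf.\ \cite{Dembook}): combine local mollifications of $w$ with the \emph{regularised maximum} $M_\eta$, a smooth convex function that approximates the ordinary $\max$ and preserves both plurisubharmonicity and strict plurisubharmonicity with controlled loss.

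First I would fix a locally finite open cover $\{B_k\}$ of $\Omega$ by Euclidean balls with $B_k \Subset \Omega$, together with a shrunken cover $\{B'_k\}$, $B'_k \Subset B_k$, refined enough that the oscillations of $\lambda$ and $\gamma$ on each $B_k$ are as small as I need them. For each $k$, pick $\epsilon_k > 0$ small enough that $w_k := w \ast \chi_{\epsilon_k}$ is smooth on a neighbourhood of $\overline{B_k}$ and satisfies
\begin{equation*}
	w(z) \leq w_k(z) \leq w(z) + \lambda(z)/3 \quad \text{on } \overline{B_k}.
\end{equation*}
The lower bound is automatic by plurisubharmonicity (radial mollifier), the upper bound comes from uniform continuity of $w$. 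Assuming continuity of $\gamma$, which is the standard Richberg setting, shrink $\epsilon_k$ further so that $\ddc w_k \geq (1-\lambda(z)/3)\gamma$ on $\overline{B_k}$.

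Next I would use the regularised max $M_\eta\colon \R^p \to \R$: smooth, convex, symmetric, non-decreasing in each variable with $\sum_i \partial M_\eta/\partial t_i = 1$, equal to the ordinary $\max$ whenever one argument beats the others by more than $\eta$, and satisfying $\max \leq M_\eta \leq \max + \eta$. Its convexity together with the sum-of-derivatives identity yields, for plurisubharmonic $u_i$ with $\ddc u_i \geq \gamma_0$, the bound $\ddc M_\eta(u_1,\ldots,u_p) \geq \gamma_0$. To stitch the $w_k$ together, modify each to $u_k := w_k - \beta_k$ where $\beta_k \in C^\infty_c(B_k)$ is non-negative, vanishes on $B'_k$, and grows beyond the global oscillation of $w$ before $\partial B_k$; extend $u_k$ outside $B_k$ to be smaller than $w - 1$. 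Then at each $z \in \Omega$ only finitely many $u_k$ lie within $\eta$ of the pointwise maximum, so $\tilde w(z) := M_\eta(u_1(z), u_2(z), \ldots)$ is well-defined and smooth. Since $u_k = w_k \geq w$ on $B'_k$ we get $\tilde w \geq w$; and since $u_k \leq w_k \leq w + \lambda/3$ wherever $u_k$ is active, we get $\tilde w \leq w + \lambda$ once $\eta \leq \lambda/3$. The bound $\ddc \tilde w \geq (1-\lambda)\gamma$ then follows from $\ddc u_k = \ddc w_k \geq (1-\lambda/3)\gamma$ on $B'_k$ and the convexity identity for $M_\eta$.

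The main obstacle is the bookkeeping: balancing the cover $\{B_k\}$, mollification radii $\{\epsilon_k\}$, cutoffs $\{\beta_k\}$, and the smoothing parameter $\eta$ so that the three desiderata hold simultaneously, with parameters that necessarily vary from ball to ball because $\lambda$ and $\gamma$ are not constant. The analytic content is really the existence of $M_\eta$ with the listed properties; once that is granted, the rest reduces to local estimates and convexity.
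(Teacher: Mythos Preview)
The paper does not prove this lemma at all: it is simply quoted from \cite[Thm~5.21]{Dembook} (originally \cite{R}) and used as a black box. Your sketch is precisely the Richberg scheme that underlies that reference, so in spirit you are reproducing the cited proof rather than offering an alternative.

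That said, there is a genuine gap in your construction, and it is exactly at the step you flag as ``bookkeeping''. Subtracting a smooth cutoff $\beta_k$ destroys the Levi-form lower bound: you only have $\ddc u_k = \ddc w_k \ge (1-\lambda/3)\gamma$ where $\beta_k\equiv 0$, i.e.\ on $B'_k$, whereas on the annulus $B_k\setminus B'_k$ the term $-\ddc\beta_k$ is uncontrolled. The regularised max inherits the bound $\ddc M_\eta(u_1,\dots)\ge\gamma_0$ only at points where \emph{every} active $u_k$ satisfies $\ddc u_k\ge\gamma_0$. But by continuity $\beta_k$ is small just outside $\partial B'_k$, so there $u_k$ can still lie within $\eta$ of the maximum (indeed $u_k\le w+\lambda/3-\beta_k$ while the max is only $\ge w$), and at such points you have no control on $\ddc u_k$. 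Hence the asserted bound $\ddc\tilde w\ge(1-\lambda)\gamma$ does not follow from your construction.

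The standard fix, and what is actually done in \cite{Dembook}, is not to subtract a cutoff but to \emph{add} small constants $\delta_k>0$ (which leave $\ddc$ untouched) and to work with three nested covers $B''_k\Subset B'_k\Subset B_k$. One arranges that $w_k+\delta_k$ exceeds the already-constructed approximation on $\overline{B''_k}$ but falls strictly below it on $B'_k\setminus B''_k$ by more than the relevant $\eta_k$; then near $\partial B'_k$ the regularised max coincides with the previous function and the gluing is smooth without ever perturbing any Hessian. With that modification your outline goes through.
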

	We will now use this lemma, where 
	\begin{eqnarray*}
		\gamma &=& C \frac{\log \big [\tilde C_1 \log \frac{1}{\delta(z)} \big]}{\log \frac{1}{\delta(z)} } \ddc |z|^2, \\
		\lambda &=& - \frac{w}{2}.
	\end{eqnarray*}
	Hence by Lemma \ref{lemRich}, there exists $\tilde w$ satisfying the following bounds
	\begin{equation*}
			-\frac{ \log(2)}{\log(1/\delta(z))} - C_1 \omega(\delta(z)) \leq \tilde w(z)
			\leq \frac{1}{2}\frac{\log \frac{f(\epsilon_0)}{\delta(z) + f(\epsilon_0)} }{\log 1/\epsilon_0}, \quad \delta(z) \leq \epsilon_0
	\end{equation*}
	To be precise we will extend $\tilde w$ to $\partial \Omega$, as $0$, hence obtaining $\tilde w \in C^0(\bar \Omega)$.
	Moreover
	\begin{eqnarray*}
		\ddc \tilde w(z) &\geq& (1+w(z)/2) C \frac{\log \big [\tilde C_1 \log \frac{1}{\delta(z)} \big]}{\log \frac{1}{\delta(z)} } \ddc |z|^2 \\
		&\geq&
		\left ( 1-\frac{ \log(2)}{\log(1/\delta(z))} - C_1 \omega(\delta(z))  \right )C \frac{\log \big [\tilde C_1 \log \frac{1}{\delta(z)} \big]}{\log \frac{1}{\delta(z)} } \ddc |z|^2
	\end{eqnarray*}
	for $\delta(z) < \epsilon_0$, redefining $\epsilon_0 > 0$ to be small enough we obtain our result, with a new constant $C > 0$.
\begin{flushright}
	\qedsymbol
\end{flushright}

\subsection*{Acknowledgements}
We would like to thank U. Cegrell and P. \AA{}hag of Ume\aa{} University for inspiring discussions on the theme of this paper.

\end{document}